\newtheorem{theorem}{Theorem} 
\theoremstyle{definition}
\theoremstyle{remark}
\newtheorem*{proof-claim}{Proof}
\newenvironment{changemargin}[2]{\begin{list}{}{%
\setlength{\topsep}{0pt}%
\setlength{\leftmargin}{0pt}%
\setlength{\rightmargin}{0pt}%
\setlength{\listparindent}{\parindent}%
\setlength{\itemindent}{\parindent}%
\setlength{\parsep}{0pt plus 1pt}%
\addtolength{\leftmargin}{#1}%
\addtolength{\rightmargin}{#2}%
}\item }{\end{list}}
\def\L{\mathsf{L}}
\def\K{\mathsf{K}}
\begin{document} 

\title{Rainbow simplices in triangulations of manifolds}  
\author{ Luis Montejano}

\maketitle 

\begin{abstract} 
Given a coloration of the vertices of a triangulation of a manifold, we give homological conditions on the chromatic complexes under which it is possible to obtain a rainbow simplex
\end{abstract}
\section{Introduction and preliminaries}
Consider a simplicial complex $\K$ which is a triangulation of a $n$-dimensional manifold and  whose vertices are partitioned into $n+1$ subsets $V_0,\ldots,V_n$.  Following the spirit of the Sperner lemma, the purpose of this paper is to obtain conditions  that allow us to ensure the existence of a rainbow simplex, that is, an $n$-simplex of $\K$ with exactly one vertex in each $V_i$.  In particular, we  will be interested in give homological conditions on the chromatic complexes $\K_{\{i\}}$, where 
we denote by $\K_{\{i\}}$ the subcomplex of $\K$ generated by the vertices of $V_i$.

\bigskip

During this paper, we use reduced homology with coefficients in a arbitrary field and, if no confusion arise, we shall not distinguish between a simplicial complex and its topological realization. For instance, if $\L$ is a subcomplex of the simplicial complex $\K$,  in this paper we shall denote by $\K\setminus \L$ the space $|\K|\setminus |\L|$.

\bigskip

Meshulam's lemma~\cite[Proposition 1.6]{Mh2} and~\cite[Theorem 1.5]{Mh1}  is a Sperner-lemma type result, dealing with coloured simplicial complexes and rainbow simplices, and in which the classical boundary condition of the Sperner lemma is replaced by an acyclicity condition. It is an important result from topological combinatorics with several applications in combinatorics, such as the generalization of Edmonds' intersection theorem by Aharoni and Berger~\cite{AB} and many other results in which obtaining a system of distinct representatives is relevant, like for example, the Hall's theorem for hypergraphs \cite{AH}. 
Following this spirit, Meunier and Montejano  \cite{MM} generalized Meshulam's lemma obtaining the following result which is the main tool in this paper to obtain rainbow simplices.

\bigskip
  
Consider a simplicial complex $\K$ whose vertex set is partitioned into $V_0,\ldots,V_n$. 
For $S\subseteq \{0,1,\dots n\}$, we denote by $\K_S$ the subcomplex of $\K$ induced by the vertices in 
$\bigcup_{i\in S}V_i$.  
Suppose that  for every nonempty $S\subseteq \{0,1,\dots n\}$,
\begin{equation}\label{eq1}
\widetilde{H}_{|S|-2}(\K_S)=0.
\end{equation} 
 Then  there exists a rainbow simplex $\sigma$ in $\K$. See \cite[Theorem 4]{M} and \cite{MM} for a proof.

\bigskip

We summarize below what we need about PL topology in this paper. See, for example, the book of Rourke and Sanderson \cite{RS}.

\bigskip
Let $\{U_1, U_2\}$ be a partition of the vertices $V(\K)$ of the simplicial complex $\K$ and let $<U_1>$ and $<U_2>$ be the subcomplexes of $\K$ induced by $U_1$ and $U_2$, respectively. 
Let $N(<U_i>,\K')$ be the derived neighborhoods of $U_i$ in $\K$ as subcomplex of the first barycentric subdivision $\K'$, $i=1,2$. Hence:
\begin{itemize}
\item $N(<U_i>,\K')$ is a strong deformation retraction of $<U_i>$, $i=1,2$ and 
\item $\K \setminus <U_2>$ is a strong deformation retraction of $<U_1>$. 
\end{itemize}

\section{Rainbow simplices in triangulations of $2$ and $3$-manifolds}

Our first result deals with $3$-colorations in triangulation of surfaces
\bigskip

\begin{theorem} \label{thmsurface}
Consider a simplicial complex $\K$ which is a triangulation of a $2$-dimensional manifold and  whose vertices are partitioned into $3$ nonempty subsets $V_0,V_1,V_2$. If for every $i=0,1,2$,
$$\widetilde H_1(\K_{\{i\}}, \K_{\{i\}}\cap\partial\K)=0,$$
 then $\K$ admits a rainbow triangle. 
\end{theorem}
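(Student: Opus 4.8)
The plan is to apply the Meunier–Montejano criterion: to produce a rainbow triangle it suffices to verify that $\widetilde{H}_{|S|-2}(\K_S)=0$ for every nonempty $S\subseteq\{0,1,2\}$. This splits into three cases by cardinality, and the work is to translate the hypothesis on $\widetilde{H}_1(\K_{\{i\}},\K_{\{i\}}\cap\partial\K)$ into the required absolute-homology vanishing. For $|S|=1$ the condition $\widetilde{H}_{-1}(\K_{\{i\}})=0$ just says each $\K_{\{i\}}$ is nonempty, which holds since each $V_i$ is nonempty. The substantive cases are $|S|=2$, where I must show $\widetilde{H}_0(\K_S)=0$ (i.e.\ each $\K_{\{i,j\}}$ is connected), and $|S|=3$, where I must show $\widetilde{H}_1(\K)=0$.

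The first main step is to extract connectivity of the two-color subcomplexes from the single-color hypothesis. Here I plan to use the PL topology recalled in the preliminaries. Write $V(\K)=V_i\cup(V_j\cup V_k)$ and take $U_1=V_i$, $U_2=V_j\cup V_k$; then $\K\setminus\langle U_2\rangle$ deformation retracts onto $\langle U_1\rangle=\K_{\{i\}}$, and dually $\langle U_2\rangle=\K_{\{j,k\}}$ retracts from $\K\setminus\langle U_1\rangle=\K\setminus\K_{\{i\}}$. The idea is to relate the relative homology $\widetilde{H}_*(\K_{\{i\}},\K_{\{i\}}\cap\partial\K)$ to the homology of the complementary two-color complex via Alexander–Lefschetz duality in the closed-manifold case (or Lefschetz duality with boundary in general), using that $\K$ is a $2$-manifold. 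Concretely, the complement of a color class in a surface is controlled by the homology of that color class, and the relative condition modulo the boundary is exactly what duality needs to conclude $\widetilde{H}_0(\K_{\{j,k\}})=0$, giving connectivity of every two-color subcomplex.

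The second main step, and the one I expect to be the genuine obstacle, is the case $S=\{0,1,2\}$: showing $\widetilde{H}_1(\K)=0$ for the whole surface. This is where the relative hypotheses across all three colors must combine. A closed orientable surface of positive genus has $\widetilde{H}_1\neq0$, so the theorem must be forcing $\K$ to be (homologically) a sphere or a disk-like object; the role of the three separate vanishing conditions, together with $\partial\K$, is presumably to rule out any surviving $1$-cycle. I would try a Mayer–Vietoris argument built from the three derived-neighborhood pieces $N(\langle V_i\rangle,\K')$ and their intersections, feeding in the established vanishing of $\widetilde{H}_0$ of pairwise unions and the relative hypotheses to kill $\widetilde{H}_1(\K)$. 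The delicate point is bookkeeping the boundary: the relative groups $\widetilde{H}_1(\K_{\{i\}},\K_{\{i\}}\cap\partial\K)$ sit in long exact sequences of the pairs, and I must ensure the connecting maps and the duality identifications are compatible so that the three local conditions assemble into the single global vanishing.

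In summary, the strategy is (i) reduce to the Meunier–Montejano homological test, (ii) handle $|S|\le2$ by combining nonemptiness with a duality/deformation-retract argument turning the relative single-color hypotheses into connectivity of two-color subcomplexes, and (iii) handle $|S|=3$ by a Mayer–Vietoris or duality computation showing the surface has vanishing reduced first homology, which is the step where the interplay between the three color classes and the boundary is hardest to control.
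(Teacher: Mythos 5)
Your reduction to the Meunier--Montejano criterion and your handling of $|S|\le 2$ are in the right spirit (the paper proves connectivity of $\K_{\{j,k\}}$ by a hands-on argument, pushing an arc radially out of the disk components of the derived neighborhood $N(\K_{\{i\}},\K')$ and then retracting onto $\K_{\{j,k\}}$; your Lefschetz-duality route to $\widetilde H_0(\K_{\{j,k\}})=0$ is a plausible alternative). But your step (iii) contains a genuine gap: the hypotheses do \emph{not} imply $\widetilde H_1(\K)=0$, so no Mayer--Vietoris bookkeeping can establish it. Take a fine triangulation of the torus admitting a proper $3$-coloring of its vertices (e.g.\ a suitable grid triangulation with diagonals): each $\K_{\{i\}}$ is then a discrete set of points, so $\widetilde H_1(\K_{\{i\}},\K_{\{i\}}\cap\partial\K)=0$ for all $i$, yet $\widetilde H_1(\K)\neq 0$. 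The theorem is still true in that example (every triangle is rainbow), so your suspicion that ``the theorem must be forcing $\K$ to be homologically a sphere'' is exactly where the plan breaks down: the criterion (\ref{eq1}) simply fails for $\K$ itself on the three-element set $S$, and you cannot apply it to $\K$ directly.

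The missing idea is the paper's passage to the universal cover. One first proves the theorem when $\K$ is simply connected (there $\widetilde H_1(\K)=0$ is free, and only the $|S|\le 2$ cases need work). For general $\K$, the covering projection $\pi:\widetilde\K\to\K$ is simplicial, the coloring pulls back via $\widetilde V_i=\pi^{-1}(V_i)$, and each component of $\pi^{-1}(\L)$ is contractible whenever the component $\L$ of $\K_{\{i\}}$ is (which is what the relative-homology hypothesis gives, since a connected subcomplex of a surface with vanishing first homology rel its boundary trace is contractible). The simply connected case then yields a rainbow simplex in $\widetilde\K$, and $\pi$ maps it to a rainbow simplex of $\K$. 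Without this (or some equivalent device for circumventing the $|S|=3$ condition), the proof cannot be completed along the lines you propose.
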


\begin{proof} The condition $\widetilde H_1(\K_{\{i\}}, \K_{\{i\}}\cap\partial\K)=0$ implies that every component $\L$ of $\K_{\{i\}}$ is contractible and the intersection of $\L$ with the boundary of $\K$ is either empty or contractible.
Of course, we may assume that $\K$ is connected. Let us assume first that $\K$ is the triangulation of a simply connected surface without boundary. In order to find a rainbow triangle in $\K$, by (\ref{eq1}), it will be enough to prove that
$\K_S$ is  connected,  
for every subset $S\subset\{0,1,2\}$ of size two.  Asume $S=\{1,2\}$ and  let $p$ and $q$ be two points in $\K_{\{1,2\}}$. By hypothesis, $N(\K_{\{0\}},\K')$ is a countable collection $\{D_i\}$ of pairwise disjoint topological disk embedded in the surface $\K$. 
Let $f_i:\mathbb{B}^2\to D_i$ be  homeomorphisms and let 
$\{x_i=f_i(0)\}$ be the collection of centers of all these disks. Since $\K$ is connected, there is an arc $\Gamma$ joining $p$ and $q$ in $\K$. Furthermore, by transversality, we may assume without loss of generality, that this arc $\Gamma$ does not intersect the collection of centers
$\{x_i\}$.  Moreover, use the radial structure of the disks $\{D_i\}$, giving by the homeomorphisms $\{f_i\}$, to push the arc $\Gamma$ outside $\K_{\{0\}}$.  Since $\K_{\{1,2\}}$ is a strong deformation retract of $\K\setminus \K_{\{0\}}$, we can deform de arc $\Gamma$ to an arc from $p$ to $q$ inside $\K_{\{1,2\}}$, those proving the connectivity of $\K_{\{1,2\}}$ as we wished.

Suppose now  that $\K$  is the triangulation of a $2$-dimensional non simply connected manifold. Taking the universal cover of this surface we obtain a simply connected simplicial complex $\widetilde \K$ that inherits from $\K$ a $3$-coloration $\widetilde V_0, \widetilde V_1, \widetilde V_2$ on its vertices. That is, there is a simplicial map $\pi:\widetilde\K\to \K$ which is a universal cover, where $\widetilde V_i=\pi^{-1}(V_i)$, $i=0,1,2$. Note that if $\L$ is contractible, hence 
$\pi^{-1}(\L)$ is a countable union of pairwise disjoint contractible subcomplexes of $\widetilde\K$. Therefore, 
the fact that  every component $\L$ of $\K_{\{i\}}$  is contractible implies that every component of $\widetilde \K_{\{i\}}$  is contractible.  Consequently, by the first part of the proof, there is a rainbow simplex in $\widetilde \K$ and since $\pi$ sends colorful simplices of $\widetilde \K$ into colorful simplices of $\K$, we obtain our desired rainbow triangle.

The proof of the theorem for triangulations of surfaces with boundary is completely similar, except that if for a component $\L$ of $\K_{\{i\}}$ such that $\L$ and $\L\cap\partial \K$ are nonempty contractible spaces, then we use a homeomorphism 
$$f_i:\big(\mathbb{B}^2\cap\{(x,y) \in \mathbb{R}^2 \mid y\geq 0\}, [-1,1]\times\{0\}\big)\to \big(N(\L, \K'), N(\L\cap \partial \K, \partial K')\big)$$
in such a way that the center $x_i=f_i(0)$ lies in $\partial\K$ the boundary of $\K$.
\end{proof}
\bigskip
\bigskip

 For triangulations of $3$-dimensional manifolds, we have the following theorem.
\bigskip

\begin{theorem} \label{thmthree}
Consider a simplicial complex $\K$ which is a triangulation of a $3$-dimensional  manifold   whose vertices are partitioned into $4$ subsets $V_0,V_1,V_2,V_3$. Suppose that 
\begin{enumerate}
\item $\widetilde H_2(\K)=0$,
\item for $i=0,\dots 3$,  $\K_{\{i\}}$ is contractible and the intersection of  $\K_{\{i\}}$ with the boundary of $\K$ is either empty or contractible, 
\item for every pair of integers $0\leq i <j\leq 3$,  there is a $1$-dimensional simplex with one vertex in $V_i$ and the other in $V_j$. 
\end{enumerate}
then $\K$ admits a rainbow tetrahedron. 
\end{theorem}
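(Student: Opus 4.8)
The plan is to verify the hypotheses of the Meunier--Montejano criterion~(\ref{eq1}) for every nonempty $S\subseteq\{0,1,2,3\}$, from which a rainbow tetrahedron follows immediately. There are four cases according to $|S|$, and the point is that the three hypotheses of the theorem are tailored exactly to them. For $|S|=1$ we need $\widetilde H_{-1}(\K_{\{i\}})=0$, i.e. $\K_{\{i\}}\neq\emptyset$, which is immediate from hypothesis (2). For $|S|=4$ we need $\widetilde H_2(\K)=0$, which is precisely hypothesis (1). It remains to treat the pairs ($|S|=2$, where I must show that $\K_{\{i,j\}}$ is connected) and the triples ($|S|=3$, where I must show $\widetilde H_1(\K_{\{i,j,k\}})=0$).

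The case $|S|=2$ is purely combinatorial. By hypothesis (2) each $\K_{\{i\}}$ is contractible, hence connected, and its vertex set is all of $V_i$. Given a pair $\{i,j\}$, hypothesis (3) furnishes an edge $e$ joining a vertex of $V_i$ to a vertex of $V_j$. Then $\K_{\{i\}}\cup e\cup \K_{\{j\}}$ is a connected subcomplex of $\K_{\{i,j\}}$ containing every vertex of $V_i\cup V_j$, so $\K_{\{i,j\}}$ is connected and $\widetilde H_0(\K_{\{i,j\}})=0$.

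The case $|S|=3$ is the heart of the argument and is where the manifold structure enters. Fix a triple and let $l$ be the missing colour. By the second bullet of the PL preliminaries, $\K_{\{i,j,k\}}$ is a strong deformation retract of $\K\setminus\K_{\{l\}}$; using the first bullet to collapse the collar $N_l\setminus\K_{\{l\}}$ of the derived neighborhood $N_l:=N(\K_{\{l\}},\K')$ onto $\partial N_l$, this space deformation retracts further onto the complementary submanifold $M_l:=\overline{\K\setminus N_l}$, so that $\widetilde H_*(\K_{\{i,j,k\}})\cong\widetilde H_*(M_l)$. Because $\K_{\{l\}}$ is contractible by hypothesis (2), $N_l$ is a compact acyclic PL $3$-submanifold, and Lefschetz duality applied to $(N_l,\partial N_l)$ shows that $\partial N_l$ is a homology $2$-sphere. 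Writing $\K=N_l\cup M_l$ with $N_l\cap M_l=\partial N_l$ and running the Mayer--Vietoris sequence, the acyclicity of $N_l$ together with $\widetilde H_1(\partial N_l)=\widetilde H_0(\partial N_l)=0$ yields $\widetilde H_1(M_l)\cong\widetilde H_1(\K)$. Finally hypothesis (1) and Poincar\'e duality over the coefficient field give $\widetilde H_1(\K)=\widetilde H_2(\K)=0$ (for a closed orientable $3$-manifold $b_1=b_2$), whence $\widetilde H_1(\K_{\{i,j,k\}})=0$, as required.

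I expect the main obstacle to be the geometric input of the last case: establishing that the derived neighborhood of a contractible subcomplex of a $3$-manifold is acyclic with homology-sphere boundary, and arranging the deformation retractions and the Mayer--Vietoris bookkeeping so that each hypothesis is consumed exactly once. A secondary difficulty is the case in which $\K$ has nonempty boundary and some $\K_{\{l\}}$ meets $\partial\K$: here, as in the proof of Theorem~\ref{thmsurface}, one replaces $N_l$ by a half-ball model adapted to $\partial\K$ so that the same duality and Mayer--Vietoris computations go through, and one must likewise take care with the passage from $\widetilde H_2(\K)=0$ to $\widetilde H_1(\K)=0$ when $\K$ is non-orientable or has boundary.
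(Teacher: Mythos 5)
Your overall strategy---verifying condition~(\ref{eq1}) case by case---is the same as the paper's, and your treatment of $|S|=1$, $|S|=2$ and $|S|=4$ matches it exactly. For the crucial case $|S|=3$ you take a genuinely different route: where the paper argues geometrically (extend a loop in $\K_S$ to a disk in $\K$ using simple connectivity, push the disk off the $3$-ball $N(\K_{\{l\}},\K')$ via its radial structure, then retract onto $\K_S$), you compute homologically, using Lefschetz duality for the regular neighborhood $N_l$ and a Mayer--Vietoris sequence for $\K=N_l\cup M_l$, that $\widetilde H_1(\K_S)\cong\widetilde H_1(\K)$. That bookkeeping is correct and in some respects cleaner than the paper's transversality-and-pushing argument; it also yields a sharper intermediate statement than the paper's (an isomorphism rather than just a vanishing).

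There is, however, a genuine gap at the final step: you need $\widetilde H_1(\K)=0$, which is not a hypothesis and does not follow from $\widetilde H_2(\K)=0$ in general. Poincar\'e duality gives $b_1=b_2$ only for closed orientable manifolds over an arbitrary field (or closed manifolds over $\F_2$); for a non-orientable closed $3$-manifold over $\mathbb{Q}$ (e.g.\ $\mathbb{R}P^2\times S^1$, where $b_2=0$ but $b_1=1$) or for a manifold with boundary (e.g.\ a solid torus, where $H_2=0$ but $H_1\neq 0$) the implication fails outright, so your argument does not close in precisely the cases you defer as a ``secondary difficulty.'' The paper's way around this is its very first sentence: it reduces to the case where $\K$ is connected, simply connected and closed by passing to the universal cover, as in Theorem~\ref{thmsurface}, after which $\widetilde H_1(\K)=0$ is automatic. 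That reduction (or some substitute for it) is the essential missing ingredient in your write-up, not a side issue; with it in place, your Mayer--Vietoris computation completes the proof just as well as the paper's pushing argument does.
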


\begin{proof} As in the proof of Theorem \ref{thmsurface}, we may assume without loss of generality that  $\K$ is a triangulation of a connected, simply connected $3$-dimensional  manifold without boundary.  Since $\widetilde H_2(\K)=0$, in order to get a rainbow simplex of $\K$ it is enough to prove that 
\begin{itemize} 
\item $\widetilde{H}_{1}(\K_S)=0$, for every  $S\subset \{0,1,2,3\}$ of size $3$, and 
\item  $\widetilde{H}_{0}(\K_S)=0$, for every  $S\subset \{0,1,2,3\}$ of size $2$.
\end{itemize}

Indeed, we shall prove that for every  $S\subset \{0,1,2,3\}$ of size $3$, $\K_S$ is simply connected and  for every  $S\subset \{0,1,2,3\}$ of size $2$, $\K_S$ is  connected. Assume that $S=\{1,2,3\}$.
Let  $\alpha:\mathbb{S}^1\to \K_{\{1,2,3\}}$ be a continuous map. Since $\K$ is simply connected, there is a map $\beta:\mathbb{B}^2\to  \K$ extending $\alpha$. Since $\K_{\{0\}}$ is contractible, hence its derived neighborhood $N(\K_{\{0\}},\K')$ is a 3-dimensional ball. 
Asume that for a parametrization $f:\mathbb{B}^3 \to N(\K_{\{0\}},\K')$, the point $x=f(0)$ is its center. As in the proof of Theorem \ref{thmsurface}, by transversality, we may assume without loss of generality, that the point $x$ does not lie in $\beta(\mathbb{B}^2)$.  Moreover, use the radial structure of the $3$-ball  $N(\K_{\{0\}},\K')$, given by the homeomorphism $f$,  to push $\beta(\mathbb{B}^2)$ outside $\K_{\{0\}}$.  Since $\K_{\{1,2,3\}}$ is a strong deformation retract of $\K\setminus \K_{\{0\}}$,  the map $\beta:\mathbb{B}^2\to  \K$  is homotopic to a map whose image lies inside $\K_{\{1,2,3\}}$ and of course extends the map $\alpha$. This proves that $\K_{\{1,2,3\}}$ is simply connected.  Finally,  given two different integers $0\leq i <j\leq m$, the connectivity of $\K_{\{i,j\}}$ follows from the connectivity of $\K_{\{i\}}$ and 
$\K_{\{j\}}$ plus the existence of a $1$-dimensional simplex with one vertex in $V_i$ and the other in $V_j$.  This completes the proof of this theorem.

\end{proof}

\bigskip

\section {Rainbow simplices in triangulations of $n$-dimensional manifolds}

\begin{theorem} \label{thm4surface}
Consider a simplicial complex $\K$ which is a triangulation of a $4$-dimensional closed  manifold and  whose vertices are partitioned into $5$ subsets $V_0, \dots,V_4$.  
\begin{enumerate}
\item $\widetilde H_2(\K)=\widetilde H_3(\K)=0$,
\item for every pair of integers $0\leq i <j\leq 4$   there is a $1$-dimensional simplex with one vertex in $V_i$ and the other in $V_j$,
\item for every $i=0,\dots,4$, the subcomplex $K_{\{i\}}$ is contractible and has as a regular neighborhood a $4-ball$,
\item for every $S\subset\{1,\dots,5\}$ of size $2$,  $K_S$ have as a regular neighborhood  a handle body 
\end{enumerate}

then $\K$ admits a rainbow $4$-simplex. 

\end{theorem}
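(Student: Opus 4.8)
The plan is to verify the Meunier--Montejano criterion~(\ref{eq1}) for every nonempty $S\subseteq\{0,1,2,3,4\}$, using the same geometric ``fill and push off'' scheme as in Theorems~\ref{thmsurface} and~\ref{thmthree}. As there, I would first reduce to $\K$ connected. For $|S|=5$ the required vanishing $\widetilde{H}_{3}(\K)=0$ is hypothesis~(1), and for $|S|=1$ it is the contractibility of $\K_{\{i\}}$ in~(3). Since $\K$ is a closed $4$-manifold, Poincar\'{e} duality over the coefficient field converts hypothesis~(1) into the two facts I will actually use for filling cycles inside $\K$, namely $\widetilde{H}_{1}(\K)=0$ (from $\widetilde{H}_{3}(\K)=0$) and $\widetilde{H}_{2}(\K)=0$. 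It then remains to treat $|S|=2,3,4$.

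For $|S|=4$, say $S=\{1,2,3,4\}$ with complementary colour $\{0\}$, I would take a $2$-cycle $z$ in $\K_{S}$, write $z=\partial w$ for a $3$-chain $w$ in $\K$ using $\widetilde{H}_{2}(\K)=0$, and push $w$ off $\K_{\{0\}}$. By~(3) the derived neighbourhood $N(\K_{\{0\}},\K')$ is a $4$-ball, so its spine is a single point $x$; a generic $3$-chain in a $4$-manifold misses a point ($3+0<4$), so by transversality I may assume $w$ avoids $x$, and the radial structure of the ball pushes $w$ off $N(\K_{\{0\}},\K')$. Since $\K_{S}$ is a strong deformation retract of $\K\setminus\K_{\{0\}}$, the chain $w$ deforms into $\K_{S}$, so $z$ bounds there and $\widetilde{H}_{2}(\K_{S})=0$. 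For $|S|=2$ the argument is literally that of Theorem~\ref{thmthree}: $\K_{\{i\}}$ and $\K_{\{j\}}$ are connected by~(3), and the edge provided by~(2) joins them, so $\K_{\{i,j\}}$ is connected.

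The case $|S|=3$, say $S=\{2,3,4\}$ with complementary pair $\{0,1\}$, is where hypothesis~(4) enters and is the heart of the matter. I would take a $1$-cycle $z$ in $\K_{S}$, fill it with a $2$-chain $w$ in $\K$ using $\widetilde{H}_{1}(\K)=0$, and push $w$ off $\K_{\{0,1\}}$. The role of~(4) is exactly to guarantee that the regular neighbourhood $N(\K_{\{0,1\}},\K')$, being a $4$-dimensional handlebody, collapses onto a $1$-dimensional spine $\Sigma$. Now a generic $2$-chain in a $4$-manifold misses the $1$-complex $\Sigma$ ($2+1<4$), so by general position I may assume $w\cap\Sigma=\emptyset$; then $w\subseteq\K\setminus\Sigma$, which deformation retracts onto $\K\setminus\operatorname{int}N(\K_{\{0,1\}},\K')$ and hence, as in the previous cases, onto the strong deformation retract $\K_{S}$ of $\K\setminus\K_{\{0,1\}}$. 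Thus $z$ bounds in $\K_{S}$ and $\widetilde{H}_{1}(\K_{S})=0$, which completes the verification of~(\ref{eq1}) and produces the rainbow $4$-simplex.

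I expect three points to require genuine care. First, hypothesis~(4) must be read so as to supply a bona fide $1$-dimensional spine $\Sigma$ with $N(\K_{\{0,1\}},\K')\setminus\Sigma$ retracting onto its boundary; this is precisely what makes the codimension count $2+1<4$ available, and is the main obstacle. Second, the transversality and radial push-offs must be justified for simplicial (or singular) chains rather than embedded submanifolds, in the same spirit as the ``by transversality'' steps of the earlier proofs. Third, the reduction to $\widetilde{H}_{1}(\K)=0$ is more delicate here than in Theorems~\ref{thmsurface} and~\ref{thmthree}: passing to the universal cover need not preserve the global conditions $\widetilde{H}_{2}(\K)=\widetilde{H}_{3}(\K)=0$, so I would instead obtain $\widetilde{H}_{1}(\K)=0$ from Poincar\'{e} duality on the closed manifold $\K$, taking care of orientability (or working over $\mathbb{F}_{2}$).
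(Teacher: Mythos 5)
Your proposal follows essentially the same route as the paper: verify the Meunier--Montejano criterion~(\ref{eq1}) case by case, filling a cycle in $\K_S$ with a chain in $\K$ via hypothesis~(1), and pushing that chain off the complementary chromatic complex by general position against the spine of its regular neighbourhood (a point for the $4$-ball in the $|S|=4$ case, a $1$-complex for the handlebody in the $|S|=3$ case), then retracting onto $\K_S$; the $|S|=2$ case is the same connectivity-plus-edge argument. The one place you genuinely deviate is the source of $\widetilde H_1(\K)=0$: the paper obtains it by reducing ``as in Theorem~\ref{thmsurface}'' to the connected, simply connected case, whereas you observe (correctly) that passing to the universal cover need not preserve $\widetilde H_2(\K)=\widetilde H_3(\K)=0$ and that the cover of a closed manifold need not be closed, and you instead extract $\widetilde H_1(\K)=0$ from Poincar\'e duality on the closed manifold (modulo orientability, or over $\F_2$). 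This is a sound repair of what is arguably the shakiest step in the paper's own argument, and the rest of your proof matches the paper's.
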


\begin{proof}
As in the proof of Theorem \ref{thmsurface}, we may assume without loss of generality that  $\K$ is a triangulation of a connected, simply connected $4$-dimensional  closed manifold. 
The strategy is to get a rainbow simplex by proving (\ref{eq1}).
Since $\widetilde H_3(\K)=0$, we have that $\widetilde{H}_{3}(\K_S)=0$, for every  $S\subset \{0,\dots,4\}$ of size $5$. Let us prove that $\widetilde{H}_{2}(\K_S)=0$, for every  $S\subset \{0,\dots,4\}$ of size $4$.  
Asume $S=\{1,2,3,4\}$ and  let $\sigma^2$ be $2$-cycle of $\widetilde{H}_{2}(\K_S)$. Since $\widetilde{H}_{2}(\K)=0$, there is a chain complex $\sigma^3$ in $\K$ whose boundary is $\sigma^2$.  
Since $\K_{\{0\}}$ is contractible, its derived neighborhood  $N(\K_{\{0\}},\K')$ is a 3-dimensional ball, let $f:\mathbb{B}^4 \to N(\K_{\{0\}},\K')$ be a homeomorphism and denote by $x=f(0)$ its center. As in the proof of Theorem \ref{thmsurface}, by transversality, we may assume without loss of generality, that the point $x$ does not lie in $\sigma^3$. 
Moreover, use the radial structure of the $4$-ball  $N(\K_{\{0\}},\K')$, given by the homeomorphism $f$,  to push $\sigma^3$ outside $\K_{\{0\}}$. Since $\K_{S}$ is a strong deformation retract of $\K\setminus \K_{\{0\}}$, $\sigma^3$ is homotopic to a 
chain of complex $\K_{S}$ whose boundary is $\sigma^2$.  Therefore $\widetilde{H}_{2}(\K_S)=0$, for every  $S\subset \{0,\dots,4\}$ of size $4$.  
Let us prove now that $\widetilde{H}_{1}(\K_S)=0$, for every  $S\subset \{0,\dots,4\}$ of size $3$.  
Asume $S=\{2,3,4\}$ and  let $\delta^1$ be $1$-cycle of $\widetilde{H}_{1}(\K_S)$. Since $\widetilde{H}_{1}(\K)=0$, there is a chain complex $\delta^2$ in $\K$ whose boundary is $\delta^1$.  
Moreover, since the derived neighborhood $N(\K_{\{0,1\}},\K')$ is a handle body, there is a $1$-dimensional subpolyhedron $L^1$ with the property $L^1$ is a strong deformation retract of $N(\K_{\{0,1\}},\K')$. By transversality, we may assume that 
$\delta^2$ does not intersect $L^1$. Moreover, we can use the radial structure of $N(\K_{\{0,1\}},\K')\setminus L^1$ to push $\delta^2$ 
outside $\K_{\{0,1\}}$. Since $\K_{S}$ is a strong deformation retract of $\K\setminus \K_{\{0,1\}}$, $\delta^2$ is homotopic to a 
chain complex of $\K_{S}$ whose boundary is $\delta^1$.  Therefore $\widetilde{H}_{1}(\K_S)=0$, for every  $S\subset \{0,\dots,4\}$ of size $3$.  
Finally,  given two different integers $0\leq i <j\leq m$, the connectivity of $\K_{\{i,j\}}$ follows from the connectivity of $\K_{\{i\}}$ and 
$\K_{\{j\}}$ plus the existence of a $1$-dimensional simplex with one vertex in $V_i$ and the other in $V_j$.  This completes the proof of  Theorem \ref{thm4surface}.

\end{proof}

The same ideas in our previos theorems can be applied to obtain the following theorem.

\bigskip

\begin{theorem} \label{thmnsurface}
Consider a simplicial complex $\K$ which is a triangulation of a $n$-dimensional closed manifold and  whose vertices are partitioned into $n+1$ subsets $V_0, \dots,V_n$.  If 
\begin{enumerate}
\item $\widetilde H_2(\K)=\dots =\widetilde H_{n-1}(\K)=0$,
\item for every $S\subset\{0,\dots,n\}$ of size $i+1$,  there is a $i$-dimensional complex $L^i$ such that $L^i$ strong deformation retracts to $N(\K_S, \K')$,  $0\le i\le n-2$,
\end{enumerate}
then $\K$ admits a rainbow $n$-simplex. 

\end{theorem}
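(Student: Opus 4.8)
The plan is to verify the Meunier--Montejano criterion \eqref{eq1} for $\K$, i.e. to show that $\widetilde H_{|S|-2}(\K_S)=0$ for every nonempty $S\subseteq\{0,\dots,n\}$. As in the proof of Theorem~\ref{thmsurface}, I would first reduce to the case where $\K$ triangulates a connected, simply connected, closed $n$-manifold by passing to the universal cover $\pi:\widetilde\K\to\K$: the coloring lifts via $\widetilde V_i=\pi^{-1}(V_i)$, each induced subcomplex lifts as $\pi^{-1}(\K_S)=\widetilde\K_S$, and since $\pi$ is a local PL homeomorphism a derived neighborhood $N(\K_S,\K')$ together with its low-dimensional spine $L^i$ lifts to a derived neighborhood of $\widetilde\K_S$ with an $i$-dimensional spine. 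A rainbow simplex of $\widetilde\K$ then projects to one of $\K$. In the simply connected case hypothesis~(1), together with $\widetilde H_0(\K)=\widetilde H_1(\K)=0$, gives the clean statement that $\widetilde H_j(\K)=0$ for all $0\le j\le n-1$.

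The heart of the argument is a single uniform computation. Fix $S$ with $|S|=m$ and set $T=\{0,\dots,n\}\setminus S$, so $|T|=n+1-m$. When $m=n+1$ we have $\K_S=\K$ and $\widetilde H_{n-1}(\K)=0$ by hypothesis~(1); when $m=1$, hypothesis~(2) with $i=0$ forces $\K_{\{i\}}$ to be contractible, hence nonempty, which is exactly $\widetilde H_{-1}(\K_{\{i\}})=0$. For $2\le m\le n$ the complement satisfies $1\le |T|\le n-1$, so hypothesis~(2) (applied with $i=|T|-1=n-m$) provides an $(n-m)$-dimensional spine $L$ of $N(\K_T,\K')$. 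Given an $(m-2)$-cycle $\sigma$ in $\K_S$, I use $\widetilde H_{m-2}(\K)=0$ to fill it by an $(m-1)$-chain $\tau$ in $\K$ with $\partial\tau=\sigma$. The decisive point is the dimension count
\[
\dim\tau+\dim L=(m-1)+(n-m)=n-1<n ,
\]
so by general position $\tau$ may be moved, rel $\sigma$ (which already lies in $\K_S$, away from $L$), to be disjoint from $L$.

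Once $\tau$ avoids $L$, I would push it off $\K_T$ exactly as in Theorems~\ref{thmthree} and \ref{thm4surface}: the radial structure of $N(\K_T,\K')\setminus L$ deformation retracts that region onto $\partial N(\K_T,\K')$, carrying $\tau$ into $\K\setminus\K_T$; then, since $\K_S$ is a strong deformation retract of $\K\setminus\K_T$, a further retraction lands the filling chain inside $\K_S$. Thus $\sigma$ bounds in $\K_S$ and $\widetilde H_{m-2}(\K_S)=0$. Note that this one mechanism also delivers connectivity of a two-element $\K_S$ (the case $m=2$, where $\tau$ is a path and $L$ has dimension $n-2$), so no separate edge hypothesis is needed here, in contrast to Theorem~\ref{thm4surface}.

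I expect the main obstacle to be the rigorous lifting of hypothesis~(1) to the universal cover in the non--simply-connected case: the vanishing of $\widetilde H_2,\dots,\widetilde H_{n-1}$ for $\K$ does not by itself transfer to $\widetilde\K$, so some care (or an additional acyclicity assumption on the cover) is required there, whereas the spine conditions of hypothesis~(2) lift cleanly. The secondary technical point is to make the ``general position plus radial push'' simultaneously valid for all sizes $m$, that is, to justify that a chain of complementary dimension can be homotoped off the spine rel its boundary and then swept out of the derived neighborhood; this is standard regular-neighborhood PL theory, but it should be stated carefully since it is invoked uniformly across all dimensions.
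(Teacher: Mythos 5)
Your proposal follows exactly the approach the paper intends: the paper gives no written proof of Theorem~\ref{thmnsurface}, stating only that ``the same ideas in our previous theorems can be applied,'' and your argument is precisely the uniform generalization of the proofs of Theorems~\ref{thmthree} and~\ref{thm4surface} (fill an $(m-2)$-cycle using hypothesis~(1), push the filling chain off the complementary spine via the dimension count $(m-1)+(n-m)=n-1<n$, then retract into $\K_S$). The two caveats you raise --- that hypothesis~(1) does not automatically lift to the universal cover, and that the general-position/radial-push step for chains needs careful PL justification --- are real weak points that the paper itself glosses over in its earlier proofs rather than defects introduced by your argument.
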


Finally, we shall use Alexander duality to prove the following theorem for colored  triangulations of spheres

\bigskip

\begin{theorem} \label{thmsphere}
Consider a simplicial complex $\K$ which is a triangulation of the $n$-dimensional sphere and  whose vertices are partitioned into $n+1$ subsets $V_0, \dots,V_n$.  If for every $S\subset\{0,\dots,n\}$ and 
$1\le |S|\le n-1$,
$$ \widetilde H_{|S|}(K_S)=0,$$
then $\K$ admits a rainbow $n$-simplex. 
\end{theorem}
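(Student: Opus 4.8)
The plan is to deduce the existence of a rainbow simplex from the criterion (\ref{eq1}): it suffices to verify that $\widetilde{H}_{|S|-2}(\K_S)=0$ for every nonempty $S\subseteq\{0,\dots,n\}$. Since a partition into nonempty colour classes is intended (otherwise a rainbow simplex cannot exist), I assume each $V_i\neq\emptyset$. Writing $s=|S|$ and $S^{c}=\{0,\dots,n\}\setminus S$, I would split the verification into three ranges of $s$ and treat the two extreme ones by hand.

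For the boundary ranges the computation is immediate. When $s=1$, say $S=\{i\}$, the complex $\K_{\{i\}}$ is nonempty, so $\widetilde{H}_{-1}(\K_{\{i\}})=0$. When $s=n+1$ we have $\K_S=\K=\mathbb{S}^n$, and as the only nonvanishing reduced homology of the $n$-sphere sits in degree $n$, we get $\widetilde{H}_{n-1}(\mathbb{S}^n)=0$. Neither of these cases is covered by the stated hypothesis (whose range is $1\le|S|\le n-1$), which is why they must be disposed of separately.

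The heart of the argument is the middle range $2\le s\le n$, where I would invoke Alexander duality. Consider the partition $\{U_1,U_2\}$ of $V(\K)$ with $U_1=\bigcup_{i\in S}V_i$ and $U_2=\bigcup_{i\in S^{c}}V_i$, so that $\langle U_1\rangle=\K_S$ and $\langle U_2\rangle=\K_{S^{c}}$. By the PL facts recalled in the preliminaries, $\K_S$ is a strong deformation retract of $\K\setminus\K_{S^{c}}$, hence $\mathbb{S}^n\setminus\K_{S^{c}}=\K\setminus\K_{S^{c}}\simeq\K_S$. Since $2\le s\le n$ forces both $S$ and $S^{c}$ to be nonempty, the set $X:=\K_{S^{c}}$ is a nonempty proper subcomplex of $\mathbb{S}^n$, hence a compact, locally contractible subspace to which Alexander duality applies. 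Combining the duality isomorphism $\widetilde{H}_i(\mathbb{S}^n\setminus X)\cong\widetilde{H}^{\,n-i-1}(X)$ with the homotopy equivalence above and the identification of homology with cohomology over a field, I obtain
\[
\widetilde{H}_{i}(\K_S)\;\cong\;\widetilde{H}^{\,n-i-1}(\K_{S^{c}})\;\cong\;\widetilde{H}_{\,n-i-1}(\K_{S^{c}}).
\]
Taking $i=s-2$ gives $n-i-1=(n+1)-s=|S^{c}|$, so $\widetilde{H}_{s-2}(\K_S)\cong\widetilde{H}_{|S^{c}|}(\K_{S^{c}})$. As $2\le s\le n$ is equivalent to $1\le|S^{c}|\le n-1$, the hypothesis applied to $S^{c}$ gives $\widetilde{H}_{|S^{c}|}(\K_{S^{c}})=0$, whence $\widetilde{H}_{s-2}(\K_S)=0$, completing the verification of (\ref{eq1}).

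The main obstacle is precisely this middle step: one must set up Alexander duality correctly, checking that $\K_{S^{c}}$ is a nonempty \emph{proper} subspace so that the duality is valid, using the deformation retraction to replace the open complement $\mathbb{S}^n\setminus\K_{S^{c}}$ by $\K_S$, and getting the degree bookkeeping right so that ``degree $|S|-2$ for $S$'' is converted into ``degree $|S^{c}|$ for $S^{c}$'', which is exactly the index covered by the acyclicity hypothesis. Everything else is routine, and no passage to a universal cover is needed here since $\mathbb{S}^n$ is simply connected for $n\ge 2$ while the only remaining case $n=1$ is settled by the boundary ranges alone.
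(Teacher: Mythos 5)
Your proof is correct and takes essentially the same route as the paper: Alexander duality in $\mathbb{S}^n$, combined with the PL fact that $\K\setminus\K_{S^{c}}$ deformation retracts to $\K_S$, converts the hypothesis $\widetilde H_{|S^{c}|}(\K_{S^{c}})=0$ into the required vanishing $\widetilde H_{|S|-2}(\K_S)=0$ and then (\ref{eq1}) applies. You are in fact more careful than the paper's two-line argument, which silently passes over the extreme cases $|S|=1$ and $|S|=n+1$ and the standing assumption that every $V_i$ is nonempty, all of which you dispose of explicitly.
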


\begin{proof}
Let $S\subset [m]=\{0,1,\dots,m\}$. Then $\K-\K_S$ has the homotopy type of $K_{[m]\setminus S}$. By Alexander duality, $\widetilde H_{|S|-2}(K_S)=\widetilde H_{m+1-|S|}(K_{[m]\setminus S})=0$. 
Consequently, by (\ref{eq1}), $\K$ admits a rainbow $n$-simplex.

\end{proof}
\bigskip

\section{Acknowledgements}
The  author wish to acknowledge  support  form CONACyT under 
project 166306 and  support from PAPIIT-UNAM under project IN112614.

\end{document}